\documentclass[10pt]{article}
\usepackage{graphicx}
\usepackage{latexsym}
\usepackage{float}
\usepackage{amsmath}
\usepackage{cite}
\usepackage{amssymb}
\usepackage{amsthm}
\usepackage{url}
\usepackage{tabulary}
\usepackage{booktabs}
 \usepackage{helvet}

\DeclareMathOperator{\tdeg}{tdeg}
 
%
%
%
%
%
%

\theoremstyle{plain}
\newtheorem{theorem}{Theorem}[section]

\newtheorem{lemma}[theorem]{Lemma}
\newtheorem{Ex}[theorem]{Example}

\theoremstyle{definition}
\newtheorem{definition}{Definition}[section]

\theoremstyle{remark}

\def \lmk {l_{\mathbf{m},k}}

\def \W {{\mathbb W}}
\def \Z {{\mathbb Z}}

\def \K {{\mathbb K}}

\def \u {{\bf u}}
\def \0 {{\mathbf 0}}
\def \a {{\underline  a}}

\def \x {{\underline  x}}
\def \u {{\underline  u}}
\def \K {{\mathbb K}}
\def \m {{\mathbf m}}
\def \n {{\mathbf n}}
\def \0 {{\mathbf 0}}
\def \S {{\underline S}}
 
\begin{document}

\title{Irreducibility of integer-valued polynomials in several variables 
}


\author{Devendra Prasad 
}


  \author{Devendra Prasad\\
   devendraprasad@iisertirupati.ac.in\\
   Department of Mathematics\\ IISER-Tirupati, Tirupati, Andhra Pradesh\\
 India, 517507\\  
  }

\date{Received: date / Accepted: date}

\maketitle

\begin{abstract}  Let $\S $ be an arbitrary subset of $R^n$  where $R$ is  a domain with the field of fractions $\K$. Denote the ring of   polynomials in $n$ variables over $\K$ by $\K[\x].$  The ring of integer-valued polynomials over $\S ,$ denoted by Int$(\S,R)$, is defined as the set of the  polynomials  of $\K[\x],$ 
 which maps $\S$ to $R$. In this article, we study the irreducibility of the polynomials of Int$(\S,R)$ for the first time in the case when $R$ is a Unique Factorization Domain.  We also show that our results remain valid when $R$ is a Dedekind domain or sometimes any domain.
 
\end{abstract}
 \textbf{keywords:} Integer-valued polynomials, irreducibility

 
\section{Introduction} 
 
  Let $R$ be    a domain with the field of fractions $\K$.  
 For a given subset $\S \subseteq R^n,$ where $n \geq 1,$  consider the following subset of $\K[\x]\ (=\K[x_1, \ldots, x_n])$
 $$\mathrm{Int}(\S,R) =\{ f \in \K[\x]: f(\S) \subseteq R \}.$$
 It can be verified very easily that this set forms a   ring  and is known as the ring of integer-valued polynomials over $\S.$ In the last few decades, this ring has been a center of attraction for commutative algebraists.  This ring is widely used to construct examples/counterexamples in commutative algebra.  We refer to Cahen and Chabert \cite{Cahen} for a general reference.
 
 \medskip 
 
 In the study of ring theory, one of the most exciting concepts is irreducibility. In factorization theory also, it is very crucial to check the factors of a given polynomial. Hence, we must be familiar with the  irreducibility  of a given   polynomial.  Recently, Prasad \cite{prasadirrI} gave a new approach to test the irreducibility of a given integer-valued polynomial in one variable. The cornerstone of this study was the construction of $\pi$-sequences and  $d$-sequences, which we recall here for the sake of completeness.

     \begin{definition} A sequence $\{ u_i \}_{i \geq 0}$ of elements of  $S \subset R$, where $R$ is a unique factorization domain, is said to be a $\pi$-sequence if   for each   $k>0$,    $u_k \in S$   satisfies  
 
 $$\tfrac{(x-u_0)\ldots (x- u_{k-1})}{(u_k-u_0) \ldots (u_k-u_{k-1})} \in \mathrm{Int}(S,R_{(\pi)} ).$$

\end{definition} 

  The $d$-sequences are defined as follows.
    
    \begin{definition} For a given element $d \in R$, where $R$ is a unique factorization domain,
     let $\pi_1, \pi_2,\ldots, \pi_r$
 be all the   irreducibles   of $R$ dividing  $d$. Let for   $1 \leq j \leq r$, $\{ u_{ij} \}_{i \geq 0} $ be a $\pi_j$-sequence of $S$ and $\pi_j^{e_{kj} }$ be   $(u_{kj}-u_{0j})\ldots (u_{kj}- u_{k-1j})$ viewed as  a member of  the ring $R_{(\pi_j)}$. Then a $d$-sequence $ \{ x_i \}_{0 \leq i \leq k}$   of $S$  of length $k$  is a solution to the following congruences

   \begin{equation}\label{crt}
   x_i \equiv u_{ij} \mod \pi_j^{e_{kj}+1}\ \forall\ 1 \leq j \leq r,
   \end{equation}

  where  $0 \leq i \leq k.$ 
\end{definition}
   
   The following criterion was obtained for testing the irreducibility of an integer-valued polynomial.

  \begin{theorem}\label{mainthpre}  Let $S$ be an arbitrary subset of a unique factorization domain $R$. Assume $f = \tfrac{g}{d} \in \mathrm{Int}(S,R  ) $ is  a polynomial  of degree $k$ and $a_0, a_1, \ldots, a_k$ be a $d$-sequence. Then $f$ is irreducible in $\mathrm{Int}(S,R  ) $ iff the following holds:

for any factorization $g=g_1g_2$ in $R[x]$   and    a  divisor $\pi$  of $d$ such that $e_k$ is the maximum integer satisfying  $\pi^{e_k} \mid g_1(a_i)\ \forall\ 0 \leq i \leq \deg(g_1) , $   there exists an integer $j$  satisfying  $ 0 \leq j \leq \deg(g_2)  $ and    ${w_{\pi}(\tfrac{d}{\pi^{e_k}})}   \nmid  g_2(a_j).$
  
  \end{theorem}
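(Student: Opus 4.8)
The plan is to reduce irreducibility in $\mathrm{Int}(S,R)$ to a prime-by-prime bookkeeping of $\pi$-adic fixed divisors, and then to evaluate those fixed divisors on the $d$-sequence. First I would record two preliminary facts. The units of $\mathrm{Int}(S,R)$ are exactly $R^\times$: a unit is invertible in $\K[x]$, hence a constant in $\K$, and a constant lies in $\mathrm{Int}(S,R)$ iff it lies in $R$, so an invertible constant lies in $R^\times$. Next, because $\mathrm{Int}(S,R)\subseteq\K[x]$, any factorization $f=f_1f_2$ with $f_i\in\mathrm{Int}(S,R)$ arises, after clearing denominators and applying Gauss's lemma, from a factorization $g=g_1g_2$ in $R[x]$ together with a splitting $d=d_1d_2$ (up to a unit of $R$), with $f_i=g_i/d_i$. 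Writing $v_\pi$ for the $\pi$-adic valuation and $\delta_\pi(h):=\min_{s\in S}v_\pi(h(s))$ for the $\pi$-adic fixed divisor of $h$ over $S$, the membership $f_i\in\mathrm{Int}(S,R)$ is equivalent to $v_\pi(d_i)\le\delta_\pi(g_i)$ for every irreducible $\pi$. Such a factorization is nontrivial exactly when both $f_i$ are non-units, which I would track throughout.

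The heart of the argument is the identity
\[
\delta_{\pi_j}(h)=\min_{0\le i\le\deg h}v_{\pi_j}\bigl(h(a_i)\bigr)\qquad(\deg h\le k)
\]
for each irreducible $\pi_j\mid d$. I would derive this from the defining property of the $\pi_j$-sequence: the normalized products $\prod_{l<m}(x-u_{lj})\big/\prod_{l<m}(u_{mj}-u_{lj})$ lie in $\mathrm{Int}(S,R_{(\pi_j)})$, which says precisely that $\{u_{ij}\}$ is a $\pi_j$-ordering of $S$ and forces $\delta_{\pi_j}(h)=\min_{0\le i\le\deg h}v_{\pi_j}(h(u_{ij}))$. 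Since $a_i\equiv u_{ij}\pmod{\pi_j^{e_{kj}+1}}$, and since the fixed divisor of a polynomial of degree $\le k$ is at most the factorial valuation $e_{kj}$, the congruence preserves the $\pi_j$-valuation in the only range that affects the minimum; hence the minimum over the $u_{ij}$ equals the minimum over the $a_i$. In particular $e_k=\delta_\pi(g_1)$ and $\min_{0\le j\le\deg g_2}v_\pi(g_2(a_j))=\delta_\pi(g_2)$.

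With $\delta_\pi(g_1)=e_k$ available, a nontrivial factorization $g=g_1g_2$ lifts to a factorization of $f$ iff for every $\pi\mid d$ one can write $v_\pi(d)=v_\pi(d_1)+v_\pi(d_2)$ with $v_\pi(d_1)\le\delta_\pi(g_1)$ and $v_\pi(d_2)\le\delta_\pi(g_2)$; such a split exists iff $\delta_\pi(g_1)+\delta_\pi(g_2)\ge v_\pi(d)$, i.e.\ iff $\delta_\pi(g_2)\ge v_\pi(d)-e_k$. By the identity above this last inequality is exactly the assertion that $w_\pi(d/\pi^{e_k})=\pi^{\,v_\pi(d)-e_k}$ divides $g_2(a_j)$ for every $0\le j\le\deg g_2$. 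Therefore $f$ is reducible iff some nontrivial factorization $g=g_1g_2$ realizes this divisibility for all $\pi\mid d$ at once; negating, $f$ is irreducible iff for every such factorization there are an irreducible $\pi\mid d$ and an index $j$ with $w_\pi(d/\pi^{e_k})\nmid g_2(a_j)$, which is the stated criterion.

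The main obstacle is the identity of the second step: certifying that a single sequence $a_0,\dots,a_k$ simultaneously reads off the fixed divisors $\delta_{\pi_j}$ at every prime $\pi_j\mid d$ for all factors $g_1,g_2$ that occur. This is exactly what the Chinese-remainder construction of the $d$-sequence is engineered to achieve, but making it rigorous needs two inputs: the valuation-preservation $v_{\pi_j}(h(a_i))=v_{\pi_j}(h(u_{ij}))$ whenever the common value does not exceed $e_{kj}$ (this is where $a_i\equiv u_{ij}\pmod{\pi_j^{e_{kj}+1}}$ is used), and the bound $\delta_{\pi_j}(h)\le e_{kj}$ for $\deg h\le k$, which follows from the monotonicity of the factorial valuations along a $\pi_j$-ordering. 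A secondary point requiring care is the bookkeeping of constant (content) factors, so that the nontriviality condition ``both $f_i$ non-units'' is handled uniformly with the positive-degree case; I would dispatch this with the same valuation inequalities, noting that a constant factor affects only the $\delta_\pi$ and never the degree count.
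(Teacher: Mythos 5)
Your proposal is correct and follows essentially the same route as the paper: your key identity (the $d$-sequence reads off the $\pi$-adic fixed divisors of each factor, proved by expanding in the basis coming from the $\pi$-sequences and using the congruences modulo $\pi_j^{e_{kj}+1}$) is exactly the content of Lemma \ref{ivpcr}, and your prime-by-prime splitting of $d$ across a factorization $g=g_1g_2$ in $R[x]$ is precisely how the paper's proof (of Theorem \ref{mainth}, the multivariate version of this statement) derives the criterion. If anything, you are more explicit than the paper about the unit group, Gauss's lemma, and the exclusion of trivial factorizations, points the paper handles implicitly via its standing image-primitivity and lowest-terms assumptions.
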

      \medskip

In this way,  a criterion was   obtained for  the first time  to check the  irreducibility of a given integer-valued polynomial in a very general setting.  Observe that $d$-sequences can be obtained for any subset of a Dedekind domain (or sometimes  any domain). As a consequence, Theorem \ref{mainthpre}  remains viable for all   domains where $d$-sequences can be constructed.   It is natural to think about a generalization of Theorem \ref{mainthpre}  to the multivariate case.
 
 \medskip
 
 The irreducibility of a multivariate polynomial has a venerable history of two centuries. Several mathematicians have studied the irreducibility of a multivariate polynomial. However, as per our knowledge, this topic is never explored for the ring $\mathrm{Int}(\S,R)$ even in the simplest case when $\S=\Z^n,$ 
 where $n >1$ and $R =\Z.$ In this article, we study the irreducibility  of a multivariate integer-valued polynomial for the first time.
 
 \medskip

 The summary of the article is as follows. In Section \ref{SecPrenotassu}, we fix some notations for the whole article and recall some known/unknown concepts. 
 We introduce the notion of $d_{\m}$-sequences with some examples in Section  \ref{secd}. In Section  \ref{secirr}, we make $d_{\m}$-sequences   our main tool of study to prove our main theorem for testing the irreducibility of a given polynomial.  Section \ref{secirrgen} presents a generalization of our results in some special cases.

 \section{Preliminaries, notations and assumptions}\label{SecPrenotassu}

 In this section, we recall some known/unknown concepts and  fix some notations for the whole article. Throughout the article  $\W$ denotes the set $\{0,1,2, \ldots \}.$  
For a given $n \in \W$,   $\underline{S}$ denotes  an arbitrary subset of $R^n$ where $R$ is a unique factorization domain   with the field of fractions $\K$. For a given polynomial $f \in \K[\underline{x}] (=\K[x_1,x_2, \ldots, x_n]),$      $\tdeg (f)$ denotes the total degree of $f$. We call $ \deg (f) $ is equal to $\m =(m_1,\ldots, m_n) \in \W^n,$ if the degree of $f$ in the  $i$th variable is $m_i\ \forall\ 1 \leq i \leq n$. A polynomial $f \in \K[\underline{x}] $ is said to be  of type $(\m,k)$ if $\deg (f)= \m$ and $\tdeg(f)=k$. For given $\m$ and $\n$ in $\W^n$, we say $\m \leq  \n$ if each component of $\m$ is less than or equal to the corresponding component of $\n$.

\medskip

Take a unitary monomial basis of  
$\K [\underline{x}]$  and place a total order on it,  which 
is compatible with the total degree. 
We fix this total order    once and for all as it is very important throughout the study. In this way, the monomials are arranged in a sequence $(p_j)_{j \geq 0
}$  with  $p_0=1$  and $\tdeg(p_i) \leq \tdeg(p_j)$ if $i<j$. 

\medskip

Let $\K_{(\m,k)}[\x]$ denote the vector space of all type $ (\m,k)$ polynomials of $\K[\underline{x}].$ 
We denote by $\lmk$ the cardinality of a basis of $\K_{(\m,k)}[\x].$            Hence, the first $\lmk $ terms of the monomial ordering fixed above are sufficient  to express any   polynomial $f \in\K_{(\m,k)}[\x].$ If each component of $\m \in \W^n$ tends to infinity, then $\m$ does  not `remove' any of the polynomials in the basis of $\K[\x]$. In such a case, we say  $\m$ is {\em  sufficiently large.} Also, for a given polynomial  $f \in \K[\underline{x}] $  of type $(\m,k),\ l(f)$ denotes the number $\lmk$.

\medskip

 For  a fixed $\m \in \W^n$ and a given sequence of elements $\underline{ a}_0, \underline{a}_1,\ldots, \underline{a}_{r}$ in $R^n$  recall that

\begin{equation*}\label{definition of new delta}
 \Delta_{\m}(\underline{a}_0, \underline{ a}_1, \underline{a}_2,\ldots, 
\underline{a}_{r})= \det(p_j(\underline{a_i}))_{0 \leq i, j \leq r}.
\end{equation*}

\medskip

These determinants were also studied in Rajkumar, Reddy and Semwal \cite{Devendra} (see also \cite{prasadsurvey} and \cite{ev}) in the case of a Discrete Valuation Ring. For a given polynomial  $f= \tfrac{g}{d} \in \K[\x],$ we assume that $ g$ is the unique polynomial in $R[\x]$ and $d$ is also unique in $R$.

\medskip

A polynomial $f \in \K[\x]$ is said to be irreducible over $\K[\x]$ if it cannot be factored as $g_1(\x)g_2(\x)$ where both  of the polynomials 
have the degree greater than $\mathbf{0},$ where $\mathbf{0} \in \W^n$ is the vector with each component equal to zero. A polynomial  $f   \in \mathrm{Int}(\S,R  )   $ is said to be an  {\em `image primitive' } polynomial if we cannot find an irreducible element $\pi \in R$ such that   $\pi \mid f(a)\ \forall\ a \in \S .$ If a polynomial is not image primitive then there exists a $d \in R$ such that $d \mid f(a)\ \forall\ a \in \S .$ Now have the following factorization 

$$f =d \times \tfrac{f}{d}    $$

 in the ring $\mathrm{Int}(\S,R  ).$ Consequently, the given polynomial $f$ is reducible in the ring $\mathrm{Int}(\S,R  )   .$ Hence, we always assume that a given polynomial is image primitive.
   For brevity, by an integer-valued  polynomial,  we mean a    polynomial in Int$(\S,R)$, where $\S$ and $R$  automatically come from the context.  For an irreducible element $\pi$ and a given element $d \in R$, $w_{\pi}(d)$ denotes the highest power of  $\pi$ dividing $d$. For instance, $w_3(18)=3^2.$

  \section{$d_{\m}$-sequences} \label{secd}

 In Prasad \cite{prasadirrI} , the concept of $d$-sequences was introduced for the first time to check the irreducibility of a given integer-valued polynomial. In this section, we generalize the definition of $d$-sequences to the case of several variables.  We construct    special kinds of sequences called  {\em $d_{\m}$-sequences}. Before introducing these kinds of sequences, we need a generalization of {\em  $\pi$-sequences} called  $\pi_{\m}$-sequences, to the case of $n$ variables.  We know that a subset $\S \subseteq R^n$ is also a subset of  $R_{(\pi)}^n$ for every prime ideal $(\pi) \subseteq R$, where $R_{(\pi)}$ denotes the localization of $R$ at the prime ideal $(\pi)$. With this assumption, we give the following generalization of $\pi$-sequences.

     \begin{definition} For a fixed $\m \in \W^n,$ a sequence $\{\underline{u}_i \}_{i \geq 0}$ of elements of  $\underline{S} \subset R^n$ is said to be a $\pi_{\m}$-sequence if   for each   $k>0$,    $\u_k \in S$   satisfies  
 
 $$\tfrac{   \Delta_{\m}  (\u_0, \u_1,\ldots, \u_{k-1}, \underline{x} )}{\Delta_{\m}  (\u_0, \u_1,\ldots,  \underline{u}_k ))} \in \mathrm{Int}(\S,R_{(\pi)} ).$$


\end{definition}

   We take an assumption that in  a  $\pi_{\m}$-sequence $\{ \u_i \}_{i \geq 0},$ the first element $\u_0$ is selected arbitrarily. 
   Sequences similar to this were also studied by Rajkumar, Reddy and Semwal \cite{Devendra} in the case of a Dedekind domain to study   generalized factorials. Now we define  $d_{\m}$-sequences  as follows.

 \medskip


    \begin{definition} For a given element $d \in R$ and fixed ${\m} \in \W^n$,   
     let $\pi_1, \pi_2,\ldots, \pi_r$
 be all the   irreducibles   of $R$ dividing  $d$. Let for   $1 \leq j \leq r$, $\{ \u_{ij} \}_{i \geq 0} $ be a $\pi_{\m,j}$-sequence of $\S$ and $\pi_j^{e_{kj} }$ be   $\Delta_{\m} (\u_0, \u_1,\ldots,  \underline{u}_k ))$ viewed as  a member of  the ring $R_{(\pi_j)}$. Then a $d_{\m}$-sequence $ \{ \x_i \}_{0 \leq i \leq k}$   of $\S$  of length $k$  is a solution to the following congruences
 
   
   \begin{equation}\label{crt}
   \x_i \equiv \u_{ij} \mod \pi_j^{e_{kj}+1}\ \forall\ 1 \leq j \leq r,
   \end{equation}

  where  $0 \leq i \leq k.$ 
\end{definition}
      
  \medskip
   Throughout the article, we fix a solution of Eq. (\ref{crt}) for each $i$   and get a sequence $\a_0, \a_1, \ldots, \a_k$ of $k+1$ elements. 
    Such a sequence need not to be inside $\S.$ Clearly, a  $d_{\m}$-sequence also depends on the set chosen. If  the subset $\S$ is clear from the context,  we  call only a ` $d_{\m}$-sequence'  without mentioning the set.   This sequence is a cornerstone of  our study. Before proceeding, we give a few examples of $d_{\m}$-sequences.
  
\medskip
   In all the examples of $d_{\m}$-sequences  below, we take only two variables. We fix the following total ordering on the set of unitary monomial basis of $R[x,y]$ 
 $$1,x,y,x^2,xy,y^2,x^3,x^2y,xy^2,y^3, \ldots.$$

We assume that $x$ is always the first variable. $i.e$ if we say a polynomial is of type $((3,4),5)),$ then it means that the partial degree of $f$ in $x$ is three.

  \begin{Ex}  In the case when $R=\Z,\ \m$ is sufficiently large and $\S = \Z \times \Z,$  the sequence $$(0,0),(1,0),(0,1),(2,0),(1,1),(0,2), (3,0),(2,1),(1,2),\ \mathrm{and}\  (0,3)  $$ is  a $d_{\m}$-sequence of length $9$ for every $d \in \Z$. 

  \end{Ex} 
  
  We give an example of a $d_{\m}$-sequence when the given subset is finite.
  
  \begin{Ex} Let $R=\Z,\ \m$ be sufficiently large and $\S   $ is the set
  $$\S =\{(0,0),(1,0), (1,4),(4,0), (1,1), (4,1),(9,0),(0,1),(0,4),    (0,9) \} $$
      Then  
  $$(0,0),(1,0),(0,1),(4,0),(1,1),(0,4),  (9,0),(4,1),(1,4),\ \mathrm{and}\  (0,9)  $$ 
    
  form a  $d_{\m}$-sequence of length $9$ for every $d \in \Z$. 
  
  \end{Ex}

  \medskip
  
  We give a more general example of  $d_{\m}$-sequences.   Recall that, for a given subset $\S \subset R^n,$ the {\em fixed divisor } of a polynomial $f \in R[\x]$ over $\S$ is the greatest common divisor of the values taken by $f$ over $\S$. This quantity is denoted by $d(\S,f).$ Thus,
  
  $$d(\S,f)=  \gcd \{ f(\a): \a \in \S    \} .$$
  
  For some interesting results on the topic, we refer to   Rajkumar, Reddy and Semwal \cite{Devendra}, The whole article is devoted to the study of fixed divisors of multivariate polynomials. A nice application of fixed divisors in one and several variables  can be found in Prasad \cite{Prasad2019}.

    \medskip

   A sequence of distinct elements $\{ \a_i \}_{i \geq 0}$ of $\S$ is said to be  a {\em  fixed divisor sequence} (see   \cite{prasadsurvey} or \cite{Devendrafixed} )) if  for every $k >0,\ \exists\ l_k \in \Z,$  such that for every polynomial $f$ of total degree $k$  $$d(\S,f)=(f(\a_0),f(\a_1), \ldots, f(\a_{l_k}))
,$$  
  and no proper subset of $\{\a_0,\a_1, \ldots, \a_{l_k} \}$ 
  determines the fixed divisor of all the total degree $k$ polynomials. 

  \begin{Ex} Let $\m$ be sufficiently large and $\S $ be a subset of $R^n$ with a fixed divisor sequence $\{ \a_i \}_{i \geq 0}$. Assume 
  $$d(\S,F_r)=(F_r(\a_r))\ \forall\   r \geq 0, $$
 where $F_r(\x)=  \Delta_{\m}  (\a_0, \a_1,\ldots, \a_{r-1}, \underline{x} ).$ 
  Then $ \a_0,\a_1, \ldots, \a_k$ is a $d_{\m}$-sequence of length $k$ for every $d \in R.$

  \end{Ex}
  
\medskip

  In all the   examples given so far, we assumed that each component of ${\m} \in \W^2$ is sufficiently large.   Now we give an example when each  component of ${\m} \in \W^2$ is small and   plays a role in the calculation.

  \begin{Ex}Let $\mathbf{m} =(2,2),\ R=\Z$ and $\S =\Z \times \Z. $ Then 
  $$(0,0),(1,0),(0,1),(2,0),(1,1),(0,2), (2,1),(1,2)\ \mathrm{and}\ (2,2).  $$ 
  is a   $d_{(2,2)}$-sequence of length eight. The ninth term of the  $d_{(2,2)}$-sequence  does not exist.
  \end{Ex}

  \medskip
  
  The readers can easily see the difference between the first   example and the above example in which $\m$ appears.

  \medskip

  \section{Irreducibility of integer-valued polynomials} \label{secirr}

  We start this section with the following lemma, which is useful in proving   the main result.  
  
  \begin{lemma}\label{ivpcr} Let $\underline{a}_0, \underline{a}_1, \ldots, \underline{a}_l$ be a $d$-sequence of length $l$ for some $d \in R$ and   $l \in   \W$. 
   Then, for any  polynomial $ f' = \tfrac{g'}{d'}$ where $d' \mid d$ and    $l(f') \leq l,$ the following holds
  
    $$f' \in \mathrm{Int}(\underline{S},R  )   \Leftrightarrow f'(\underline{a}_i) \in R\ \forall\ 0 \leq i \leq  l(f').$$
  \end{lemma}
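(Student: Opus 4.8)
The plan is to pass to the localizations at the finitely many irreducibles $\pi_1,\dots,\pi_r$ dividing $d$ and to reduce the global equivalence to a local criterion at each $\pi_j$. A solution $\a_i$ of the congruences defining the $d_{\m}$-sequence lies in $R^n$, so for any such node $f'(\a_i)=g'(\a_i)/d'$ with $g'(\a_i)\in R$; since $d'\mid d$, the value has non-negative $\pi$-adic valuation for every $\pi\nmid d'$, and hence for every $\pi\notin\{\pi_1,\dots,\pi_r\}$. Using $R=\bigcap_{\pi}R_{(\pi)}$, one gets $f'\in\mathrm{Int}(\S,R)\iff f'\in\mathrm{Int}(\S,R_{(\pi_j)})$ for all $j$, and likewise $f'(\a_i)\in R\iff f'(\a_i)\in R_{(\pi_j)}$ for all $j$. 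Thus it suffices to prove, for each fixed $j$, that $f'\in\mathrm{Int}(\S,R_{(\pi_j)})$ holds iff $f'(\a_i)\in R_{(\pi_j)}$ for $0\le i\le l(f')$.

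First I would record a local interpolation statement on the genuine nodes of the $\pi_{\m,j}$-sequence $\{\u_{ij}\}$. Put $Q_r(\x)=\Delta_{\m}(\u_{0j},\dots,\u_{(r-1)j},\x)/\Delta_{\m}(\u_{0j},\dots,\u_{rj})$; by the definition of a $\pi_{\m,j}$-sequence each $Q_r\in\mathrm{Int}(\S,R_{(\pi_j)})$. Substituting an earlier node repeats a row, so $Q_r(\u_{ij})=0$ for $i<r$ while $Q_r(\u_{rj})=1$; hence $\{Q_0,\dots,Q_{l(f')-1}\}$ is triangular with respect to $p_0,\dots,p_{l(f')-1}$ and spans the $\K$-span of those monomials, which contains $f'$. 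Writing $f'=\sum_r c_rQ_r$ and evaluating at $\u_{0j},\u_{1j},\dots$ yields a unit lower-triangular system $c_r=f'(\u_{rj})-\sum_{s<r}c_sQ_s(\u_{rj})$, so by induction $f'(\u_{ij})\in R_{(\pi_j)}$ for all $i$ iff all $c_r\in R_{(\pi_j)}$ iff $f'\in\mathrm{Int}(\S,R_{(\pi_j)})$.

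It then remains to transfer integrality from the nodes $\u_{ij}$ to the $d_{\m}$-sequence nodes $\a_i$. Since $\a_i\equiv\u_{ij}\pmod{\pi_j^{e_{lj}+1}}$ and every monomial respects congruences, $g'(\a_i)\equiv g'(\u_{ij})\pmod{\pi_j^{e_{lj}+1}}$, whence $v_{\pi_j}\bigl(f'(\a_i)-f'(\u_{ij})\bigr)\ge e_{lj}+1-v_{\pi_j}(d')$. Provided the denominator bound $v_{\pi_j}(d')\le e_{lj}$ holds, this difference lies in $\pi_jR_{(\pi_j)}$, so $f'(\a_i)\in R_{(\pi_j)}\iff f'(\u_{ij})\in R_{(\pi_j)}$, which with the previous paragraph settles both implications. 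I would establish the bound in each direction: for $(\Rightarrow)$, the expansion $f'=\sum_{r<l(f')}c_rQ_r$ with $c_r\in R_{(\pi_j)}$ shows the $\pi_j$-part of the denominator of $f'$ is at most $\max_{r<l(f')}e_{rj}=e_{l(f')-1,j}$; for $(\Leftarrow)$, I would apply Cramer's rule to $\bigl(p_s(\a_i)\bigr)\vec\alpha=\bigl(f'(\a_i)\bigr)$, whose right-hand side lies in $R$ by hypothesis, to get $\alpha_s\,\Delta_{\m}(\a_0,\dots,\a_{l(f')-1})\in R$, while the congruence (together with $e_{l(f')-1,j}\le e_{lj}$) forces $v_{\pi_j}\bigl(\Delta_{\m}(\a_0,\dots,\a_{l(f')-1})\bigr)=e_{l(f')-1,j}$, giving $v_{\pi_j}(d')\le e_{l(f')-1,j}$.

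The main obstacle is precisely the monotonicity $e_{rj}\le e_{lj}$ for $r\le l$, which is what lets the single modulus $\pi_j^{e_{lj}+1}$ of the $d_{\m}$-sequence dominate all the lower-degree factorial valuations and upgrade the estimates above to the required $v_{\pi_j}(d')\le e_{lj}$. In one variable this is immediate, since the analogous quotient $\Delta(\u_0,\dots,\u_r)/\Delta(\u_0,\dots,\u_{r-1})$ is the honest product $\prod_{i<r}(u_r-u_i)\in R$; in several variables I would invoke the corresponding monotonicity of the generalized-factorial valuations attached to $\pi_{\m,j}$-sequences, i.e. the divisibility $\Delta_{\m}(\u_0,\dots,\u_r)\mid\Delta_{\m}(\u_0,\dots,\u_{r+1})$ in $R_{(\pi_j)}$, from the theory of these determinants in \cite{Devendra}. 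Granting this, the local criterion holds at every $\pi_j\mid d$, and the localization step of the first paragraph returns the global equivalence. A minor bookkeeping point is the harmless gap between the $l(f')$ coefficients actually interpolated and the $l(f')+1$ nodes $\a_0,\dots,\a_{l(f')}$ permitted by the statement: only the first $l(f')$ of them enter the argument.
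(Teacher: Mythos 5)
Your proof is correct and follows essentially the same route as the paper's: localize at the primes dividing $d$, expand $f'$ in the triangular basis of quotients $\Delta_{\m}(\u_{0j},\ldots,\u_{(r-1)j},\x)/\Delta_{\m}(\u_{0j},\ldots,\u_{rj})$ attached to the $\pi_{\m,j}$-sequence, and transfer integrality between the nodes $\u_{ij}$ and the $d_{\m}$-sequence nodes $\a_i$ via the congruences modulo $\pi_j^{e_{lj}+1}$. The additional steps you make explicit --- the denominator bound $v_{\pi_j}(d')\le e_{lj}$ (obtained from the expansion in one direction and from Cramer's rule in the other) and the monotonicity $e_{rj}\le e_{lj}$ cited from \cite{Devendra} --- are precisely the points the paper's terse proof uses implicitly, so your write-up fills gaps in the same argument rather than taking a different route.
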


  \begin{proof} For a given  $\pi \mid d,$ let $\underline{b}_0, \underline{ b}_1, \underline{b}_2,\ldots  $ be a   a $\pi$-sequence in $\underline{S}$.    Consider   the following representation of a given polynomial  $f' \in \mathrm{Int}(\underline{S},R  )  $

$$f' =\tfrac{g'}{d'} =\sum_{i=0}^{ l(f')} c_i  \tfrac{\Delta_{\mathbf{m}}(\underline{b}_0, \underline{ b}_1, \underline{b}_2,\ldots, \underline{b}_{i-1}, \underline{x}) }{\Delta_{\mathbf{m}}(\underline{b}_0, \underline{ b}_1, \underline{b}_2,\ldots, \underline{b}_{i})  }.$$  
 in the ring Int$(\S,R_{(\pi)}).$ Observe that 
 
  \begin{equation*}
\begin{split}  f'(\underline{a}_r) \in R\ \forall\ 0 \leq r \leq l(f') \Rightarrow 
&  \sum_{i=0}^{l(f')} c_i  \tfrac{\Delta_{\mathbf{m}}(\underline{b}_0, \underline{ b}_1, \underline{b}_2,\ldots, \underline{b}_{i-1}, \underline{a}_r) }{\Delta_{\mathbf{m}}(\underline{b}_0, \underline{ b}_1, \underline{b}_2,\ldots, \underline{b}_{i})  } \in R_{(\pi)}\ \forall\ 0 \leq r \leq l(f')\\
 \Rightarrow &\     c_r     \in R_{(\pi)}\ \forall\ 0 \leq r \leq l(f') \end{split}
   \end{equation*}
  With this observation, for any $\a \in \S$
      $$f'(\a)=\sum_{i=0}^{k'} c_i  \tfrac{\Delta_{\mathbf{m}}(\underline{b}_0, \underline{ b}_1, \underline{b}_2,\ldots, \underline{b}_{i-1}, \a) }{\Delta_{\mathbf{m}}(\underline{b}_0, \underline{ b}_1, \underline{b}_2,\ldots, \underline{b}_{i})  }   $$  is   a member of $R_{(\pi)}$. Hence $ f' \in \mathrm{Int}(\S,R_{(\pi)}).$ This can be shown for all the divisors of $d,$ completing the one part.  

  Conversely, let $ f' \in \mathrm{Int}(\S,R)$
  then using the congruence $\a_r \equiv \underline{b}_r \pmod{{\pi^{e_k}}}$, we have 
  
  $$ g'(\a_r) \equiv g'(\underline{b}_r) \pmod{{\pi^{e_k}}}.$$
  
  It follows that $f'(\a) \in R\ \forall\ 0 \leq i \leq \lmk' .$


  \end{proof}

  \medskip


  
 Now we prove the main theorem of this section.

  \begin{theorem}\label{mainth}  Let $f = \tfrac{g}{d} \in \mathrm{Int}(\underline{S},R  ) $ be  a polynomial  of  type $(\m,k)$ and $\underline{a}_0, \underline{a}_1, \ldots, \underline{a}_{\lmk-1}$ be a $d_{\m}$-sequence. Then $f$ is irreducible in $\mathrm{Int}(\underline{S},R  ) $  iff the following holds:

for any factorization $g=g_1g_2$ in $R[x]$   and    a  divisor $\pi$  of $d$ such that $e_k$ is the maximum integer satisfying  $\pi^{e_k} \mid g_1(\underline{a}_i)\ \forall\ 0 \leq i \leq l(g_1) , $   there exists an integer $j$  satisfying  $ 0 \leq j \leq l(g_2)  $ and    ${w_{\pi}(\tfrac{d}{\pi^{e_k}})}   \nmid  g_2(\underline{a}_j).$
  
  \end{theorem}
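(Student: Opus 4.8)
The plan is to characterize reducibility of $f$ and then negate. Since $f$ is assumed image primitive, any nontrivial factorization of $f$ in $\mathrm{Int}(\S,R)$ must split $f$ into two factors of positive degree; and since $R$ is a UFD, so is $R[\x]$, whence by the several-variable Gauss lemma every factorization of $f$ over $\K[\x]$ corresponds, up to constants in $\K$, to a factorization $g=g_1g_2$ of the numerator in $R[\x]$ with both $g_1,g_2$ nonconstant. Thus a factorization $f=h_1h_2$ in $\mathrm{Int}(\S,R)$ amounts to a choice $g=g_1g_2$ together with a distribution $d=d_1d_2$ of the denominator for which $h_1=g_1/d_1$ and $h_2=g_2/d_2$ both lie in $\mathrm{Int}(\S,R)$.

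First I would localize at each irreducible $\pi\mid d$. Because $d$ factors into prime powers that behave independently, the distribution problem decouples: writing $w_\pi(d)=\pi^{e}$, I must split $e=e_1+e_2$ so that $g_1/\pi^{e_1}$ and $g_2/\pi^{e_2}$ are each $\pi$-locally integer-valued. Here Lemma \ref{ivpcr} is the crucial tool. For a nonconstant factor one has $\deg(g_i)\le\m$ and $\tdeg(g_i)<k$, so $l(g_i)<\lmk$; since the $d_{\m}$-sequence consists of exactly the $\lmk$ terms $\underline{a}_0,\ldots,\underline{a}_{\lmk-1}$, the Lemma applies to each factor and reduces integrality to integrality at finitely many sequence points. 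Concretely, $g_1/\pi^{e_1}$ is $\pi$-integral iff $\pi^{e_1}\mid g_1(\underline{a}_i)$ for all $0\le i\le l(g_1)$, and similarly for $g_2$.

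Next I would identify the largest admissible exponent. The maximum $e_1$ for which $g_1/\pi^{e_1}$ stays $\pi$-integral is exactly the $e_k$ of the statement, the largest integer with $\pi^{e_k}\mid g_1(\underline{a}_i)$ for all $i\le l(g_1)$. Assigning this maximum to $g_1$ forces $e_2=e-e_k$, and the residual integrality of $g_2$ reads $\pi^{e-e_k}\mid g_2(\underline{a}_j)$ for all $j\le l(g_2)$. Since $w_\pi(d/\pi^{e_k})=\pi^{e-e_k}$, the $\pi$-part of the denominator is splittable precisely when $w_\pi(d/\pi^{e_k})\mid g_2(\underline{a}_j)$ for every $0\le j\le l(g_2)$; taking the maximal $e_1=e_k$ loses no generality, since any split with $e_1<e_k$ only makes the demand on $g_2$ stronger. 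Putting the primes back together, the factorization $g=g_1g_2$ lifts to a genuine factorization of $f$ in $\mathrm{Int}(\S,R)$ if and only if this divisibility holds for \emph{every} $\pi\mid d$.

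Finally, reducibility of $f$ is the existence of \emph{some} factorization $g=g_1g_2$ for which the split succeeds at all primes; negating, $f$ is irreducible exactly when, for \emph{every} factorization $g=g_1g_2$, there is at least one $\pi\mid d$ and one index $j\le l(g_2)$ with $w_\pi(d/\pi^{e_k})\nmid g_2(\underline{a}_j)$, which is the asserted criterion. The step I expect to be the main obstacle is the reduction from factorizations in $\mathrm{Int}(\S,R)$ to numerator factorizations $g=g_1g_2$ in $R[\x]$ with correct bookkeeping of the denominator: one must verify that image primitivity rules out degenerate (unit or constant) factors, that the several-variable Gauss lemma genuinely makes the $\K[\x]$- and $R[\x]$-factorizations correspond, and, most delicately, that $l(g_i)<\lmk$ so that Lemma \ref{ivpcr} applies to each factor against the given $d_{\m}$-sequence.
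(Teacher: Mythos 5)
Your proposal is correct and follows essentially the same route as the paper's proof: both reduce integrality of each candidate factor to its values on the $d_{\m}$-sequence via Lemma \ref{ivpcr}, split the denominator prime-by-prime with the maximal exponent $e_k$ assigned to $g_1$ so that the residual condition $w_{\pi}(\tfrac{d}{\pi^{e_k}}) \mid g_2(\underline{a}_j)$ for all $j \leq l(g_2)$ characterizes a successful split, and then negate to get the irreducibility criterion. The steps you flag as delicate (the Gauss-lemma correspondence between factorizations of $f$ in $\mathrm{Int}(\S,R)$ and factorizations $g=g_1g_2$ in $R[\x]$ with a distribution $d=d_1d_2$, image primitivity excluding constant factors, and the bound $l(g_i)<\lmk$ needed to invoke the lemma) are in fact left implicit in the paper, so your write-up is, if anything, more explicit than the original.
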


 \begin{proof} For a given  polynomial $f = \tfrac{g}{d} \in \mathrm{Int}(\underline{S},R  )   $, suppose for every factorization    $g=g_1g_2$  in $R[x]$ 
 there exist a divisor  $\pi$  of $d$ satisfying $\pi^{e_k} \mid g_1(\underline{a}_i)\ \forall\ 0 \leq i \leq l(g_1) $  and  $    w_{\pi}(\tfrac{d}{\pi^{e_k}})   \nmid  g_2(\underline{a}_j)$ for some non-negative integer $  j \leq \deg(g_2).$ Let us assume contrary that $f$ is reducible. Hence, there exists a factorization 
 
 $$f= \dfrac{h_1}{d_1}  \dfrac{h_2}{d_2}, $$
 
in $\mathrm{Int}(\underline{S},R  ) $ such that $  \tfrac{h_1}{d_1}$ and $ \tfrac{h_2}{d_2}$ are members of $\mathrm{Int}(\underline{S},R  ). $ If for a divisor $\pi$ of $d$, $ w_{\pi}(d_1)=\pi^{e_k}= w_{\pi}(d) ,$ then this is clearly a  contradiction   since $\pi^{0} \mid h_2(a)\ \forall\ a \in R. $ By a similar argument, $ w_{\pi}(d_1)$ cannot be $\pi^{0}.$  Hence we conclude that  $w_{\pi}(d_1)$ cannot be equal to $ w_{\pi}(d) .$
 It follows that there exists a positive integer   $     j \leq \deg(h_2)$such that  $   w_{\pi}(\tfrac{d}{\pi^{e_k}})     \nmid  h_2(\underline{a}_j)$. By Lemma \ref{ivpcr}, the polynomial  $\tfrac{h_2}{d_2} $ cannot be a member of $\mathrm{Int}(\S,R  ), $ which is   a contradiction. Hence, the polynomial $f$ must be irreducible  in $\mathrm{Int}(\S,R  ) $.

Conversely, let    $f = \tfrac{g}{d} \in \mathrm{Int}(\S,R  ) $ be irreducible in $\mathrm{Int}(\S,R  ) $ then for any factorization  $g=g_1g_2$ in $R[x]$ we can find suitable $d_1$ and $d_2$ such that 
 $$f= \dfrac{h_1}{d_1}  \dfrac{h_2}{d_2}, $$
 
where $  \tfrac{h_1}{d_1} \in \mathrm{Int}(S,R )$ and $    \tfrac{h_2}{d_2} \notin \mathrm{Int}(S,R )$. Now 
by Lemma \ref{ivpcr} there exists a divisor $\pi$  of $d_2$, such that $ w_{\pi}(d_2) \nmid h_2(\underline{a}_i)$ for some $ 0 \leq i \leq \deg(h_2) .$ However, $ w_{\pi}( \tfrac{d}{d_2} )$ divides $ h_1(\underline{a}_j)\ \forall\ 0 \leq j \leq \deg(h_1) $ which completes the proof.


 \end{proof}

    \medskip

    We give an example to illustrate our theorem. We assume that $x$ is the first variable.

    \begin{Ex} Suppose we want to  check the   irreducibility of the bivariate polynomial $$ f=\tfrac{1}{4}( 4x^2y^2+4x^2y+4xy^3-4xy^2+10xy+2x+y^4-3y^3+5y^2-3y+4)  $$ in $\mathrm{Int}(\Z^2,\Z  ). $    The only   way to factorise $f$ is the  following 
    
    $$ f=  \tfrac{1}{4}   (y^2-3y+2x+2xy+4)(y^2+2xy+1)  .$$

  Here   both  polynomials are of type $((1,2),2)$ and we know $$
   S=\{(0,0),(1,0),(0,1),(1,1),(0,2) \}    $$ 
  is a $4_{(2,1)}$-sequence  of length four.
  We need to  check the values   at these points only. Let $f_1= y^2-3y+2x+2xy+4,$ then one is the largest positive integer such that $2^1 \mid f_1(i)\ \forall\ i \in S.$ However, for the polynomial  $f_2=y^2+2xy+1$,   $2^{2-1}$ does not divide  $f_2(0,0)= 1$. Hence, the given  polynomial $f$ is irreducible  in $\mathrm{Int}(\Z^2,\Z  )  $  by Theorem \ref{mainth}.

      \end{Ex}

  \section{Further generalizations}\label{secirrgen}

In this section, we suggest  a generalization of   Theorem \ref{mainth} for some more general domains. If the ideal generated by a given element $d \in D$ in the domain $D$ factors uniquely as a product of prime ideals, then by a similar way we can   get a $d_{\m}$-sequence in this setting as well. 
For the sake of completeness, we state an analogue of our result in the case of a Dedekind domain, where all the ideals factor uniquely as a product of prime ideals. Here, for a given prime ideal $P \subset  D$ and a given element $d \in D$,  $w_P(d)$ denotes the highest power of the prime ideal $P$ dividing $d$.

 \begin{theorem}\label{mainthgen}  Let $\underline{S}$ be  an arbitrary subset of a Dedekind domain $D$ and $f = \tfrac{g}{d} \in \mathrm{Int}(\underline{S},D  ) $ be  a polynomial  of type  $(\m,k).$ If $\underline{a}_0, \underline{a}_1, \ldots, \underline{a}_{\lmk-1}$ is a $d_{\m}$-sequence, then $f$ is irreducible in $\mathrm{Int}(\underline{S},D  ) $ iff the following holds:

for any factorization $g=g_1g_2$ in $D[x]$    and    a  prime ideal $P$  dividing $d$ such that $e_k$ is the maximum integer satisfying  $P^{e_k} \mid g_1(\underline{a}_i)\ \forall\ 0 \leq i \leq l(g_1) , $   there exists an integer $j$  satisfying  $ 0 \leq j \leq l(g_2)  $ and    ${w_{P}(\tfrac{(d)}{P^{e_k}})}   \nmid  g_2(\underline{a}_j).$
  
  \end{theorem}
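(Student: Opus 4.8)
The plan is to transport the proof of Theorem~\ref{mainth} to the Dedekind setting, exploiting the fact that the only structural properties of the underlying ring that are actually used in Section~\ref{secirr} are local ones. For a Dedekind domain $D$ and a prime ideal $P$, the localization $D_P$ is a discrete valuation ring; moreover every nonzero ideal of $D$ factors uniquely into prime ideals, and distinct prime powers are pairwise comaximal. First I would fix the dictionary that replaces, throughout Sections~\ref{secd} and~\ref{secirr}, an irreducible $\pi \mid d$ by a prime ideal $P \mid (d)$, the localization $R_{(\pi)}$ by $D_P$, the symbol $w_\pi(\cdot)$ by $w_P(\cdot)$, and the divisibility $\pi^{e}\mid a$ by the membership $a \in P^{e}$ (equivalently $v_P(a)\ge e$, where $v_P$ denotes the $P$-adic valuation attached to $D_P$). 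Under this dictionary every displayed expression used in the UFD proof acquires an unambiguous meaning over $D$.

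Next I would reconstruct the auxiliary objects. A $P_{\m}$-sequence is defined exactly as a $\pi_{\m}$-sequence, but with $\mathrm{Int}(\S, D_P)$ in place of $\mathrm{Int}(\S, R_{(\pi)})$; since $D_P$ is a discrete valuation ring, the ratio $\Delta_{\m}(\u_0,\ldots,\u_{k-1},\x)/\Delta_{\m}(\u_0,\ldots,\u_k)$ behaves over $D_P$ exactly as over $R_{(\pi)}$ in the UFD case, and the integer $e_{kj}:=v_{P_j}\big(\Delta_{\m}(\u_0,\ldots,\u_k)\big)$ records the $P_j$-adic valuation of the determinant. The congruences~(\ref{crt}) defining a $d_{\m}$-sequence are then solvable, because the finitely many prime powers $P_j^{e_{kj}+1}$ attached to the distinct primes $P_j \mid (d)$ are pairwise comaximal, so the Chinese Remainder Theorem applies. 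I would then check that Lemma~\ref{ivpcr} holds verbatim over $D$: its proof only expands $f' = g'/d'$ in the local basis furnished by the $\Delta_{\m}$-ratios over each $D_P$, extracts the coefficients $c_r$ one index at a time (which uses solely that $D_P$ is a discrete valuation ring), and then glues the local conclusions through the identity $\mathrm{Int}(\S,D) = \bigcap_{P} \mathrm{Int}(\S, D_P)$, the intersection ranging over the primes dividing $d'$.

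With these pieces in place the main argument repeats the proof of Theorem~\ref{mainth} essentially line by line. For the sufficiency direction I would assume $f$ reducible, write $f = \tfrac{h_1}{d_1}\tfrac{h_2}{d_2}$ with both factors in $\mathrm{Int}(\S, D)$, fix a prime $P \mid (d)$, and rule out both $w_P(d_1) = w_P(d)$ and $w_P(d_1) = P^0$ exactly as in the UFD case; hence $w_P(d_1) \neq w_P(d)$, which forces an index $j$ with $w_P(\tfrac{(d)}{P^{e_k}}) \nmid h_2(\underline{a}_j)$, so by Lemma~\ref{ivpcr} the factor $\tfrac{h_2}{d_2}$ cannot lie in $\mathrm{Int}(\S,D)$, a contradiction. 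For the necessity direction I would, given a factorization $g = g_1 g_2$ in $D[x]$ and a prime $P \mid (d)$ with $e_k$ maximal such that $P^{e_k}\mid g_1(\underline{a}_i)$ for all $i$, use irreducibility of $f$ to obtain a splitting $d = d_1 d_2$ for which $\tfrac{h_1}{d_1}\in\mathrm{Int}(\S,D)$ while $\tfrac{h_2}{d_2}\notin\mathrm{Int}(\S,D)$; Lemma~\ref{ivpcr} then supplies precisely the prime $P$ and the index $j$ demanded by the statement.

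I expect the only genuine obstacle to be verifying that the local machinery transfers cleanly, that is, that $D_P$ is a discrete valuation ring for each prime $P \mid (d)$ and that integer-valuedness over $\S$ may be tested prime by prime via $\mathrm{Int}(\S,D) = \bigcap_P \mathrm{Int}(\S,D_P)$. Once these standard facts of Dedekind theory are invoked, the passage from ``irreducible element dividing $d$'' to ``prime ideal dividing $(d)$'' is purely notational, and no idea beyond unique factorization of ideals and comaximality of distinct prime powers is required.
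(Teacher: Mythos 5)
Your proposal is correct and follows essentially the same route as the paper, which itself proves Theorem~\ref{mainthgen} by transplanting the proof of Theorem~\ref{mainth} under exactly the dictionary you describe (irreducible $\pi \mid d$ becomes prime ideal $P \mid (d)$, $R_{(\pi)}$ becomes the discrete valuation ring $D_P$, with Lemma~\ref{ivpcr} reused to test integer-valuedness at the $d_{\m}$-sequence). If anything, you are more explicit than the paper's terse sketch about why the machinery transfers (comaximality of the $P_j^{e_{kj}+1}$ for the Chinese Remainder Theorem, and testing membership in $\mathrm{Int}(\S,D)$ prime by prime), but this is a matter of detail, not of approach.
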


\begin{proof} The steps of the proof are similar to that of   Theorem \ref{mainth}.  Suppose that   for   every factorization   $g=g_1g_2   $  in $D[x]$,    
 there exists a prime ideal 
 $P \mid d  $  such that  $P^{e_k} \mid g_1(\underline{a}_i)\ \forall\ 0 \leq i \leq l(g_1)  $   and  ${w_{P}(\tfrac{(d)}{P^{e_k}})}   \nmid  g_2(\underline{a}_j)$ for some  $ 0 \leq j \leq l(g_2).$
  Let us assume that the  polynomial $f$  is reducible in $\mathrm{Int}(\underline{S},D  ) $ . 
 Then there exist $d_1$ and $d_2$ such that 
 
 $$f= \dfrac{g_1}{d_1}  \dfrac{g_2}{d_2} , $$
 
where both of the polynomials, $\tfrac{ g_1 }{d_1}$ and $  \tfrac{g_2}{d_2} $  
are  integer-valued.
 
 Since  $\tfrac{ g_1 }{d_1}$ and  $\tfrac{ g_2 }{d_2}$  are integer-valued, hence for every $\pi \mid d$ there exists an 
 integer $c_k$   such that 
 $w_{\pi}(d_1)=  \pi^{c_k} \mid g_1(\underline{a}_i)\ \forall\ 0 \leq i \leq l(g_1)  $ and    $w_{\pi}(\tfrac{d}{\pi^{c_k}}) \mid g_2(\underline{a}_j)\ \forall\ 0 \leq j \leq l(g_2) .$ This means there exist a prime ideal 
 $P\mid d$ such that $   P^{e_k} \mid g_1(\underline{a}_i)\ \forall\ 0 \leq i \leq l(g_1)  $ and    $w_{P}(\tfrac{(d)}{P{e_k}}) \mid g_2(\underline{a}_j)\ \forall\ 0 \leq j \leq l(g_2) ,$ which is a contradiction to the assumption. The remaining part also follows by a similar way.

\end{proof}

 Throughout the proof of Theorem \ref{mainth} (or  Theorem \ref{mainthgen}) we used only unique factorization of the element $d$. Hence, Theorem   \ref{mainth} remains valid for all the domains where $d$ has a unique factorization into irreducibles (or into prime ideals) and $R_{(\pi)}$ is local ring for all $\pi$ dividing $d$. Therefore, sometimes our approach can be helpful in testing the irreducibility of an integer-valued polynomial over any subset of a domain.

  \medskip

To conclude,   this article is an initial step to test the irreducibility of   multivariate  integer-valued polynomials over   any subset  of a domain. This is still  ongoing work.  We believe that the   irreducibility of  multivariate integer-valued polynomials is more interesting than the   irreducibility of  the integer-valued polynomials in one variable. This is  a promising area of research that has not been explored so far. 
This article is the first step in this broad area of research and may provide some impetus to the readers to work in this exciting area.

    \section*{Acknowledgment}  We   thank the anonymous referee for his valuable  suggestions. The author also wishes to thank Professor S\'andor Kov\'acs (Editor, 
Periodica Mathematica Hungarica) for his help and guidance.   


\end{document}